\documentclass[review]{elsarticle}


\usepackage{amsmath}
\usepackage{amssymb}
\usepackage{mathrsfs}
\usepackage{graphicx}
\usepackage{amsmath}
\usepackage{latexsym}
\usepackage{amsfonts}
\usepackage{multirow}
\usepackage{cases}
\usepackage{tabularx}
\usepackage{booktabs}
\usepackage{color}
\usepackage{setspace}

\newtheorem{theorem}{Theorem}
\newtheorem{definition}{Definition}
\newtheorem{lemma}{Lemma}
\newtheorem{example}{Example}
\newenvironment{proof}{{\textbf{Proof.}}\,}{\hfill \qed\\}
\newtheorem{algorithm}{Algorithm}











\bibliographystyle{elsarticle-num}

\begin{document}

\begin{frontmatter}

\title{Computing the dominant eigenpair of an essentially nonnegative tensor via a homotopy method\tnoteref{mytitlenote}}
\tnotetext[mytitlenote]{This
work was supported by the National Natural Science Foundation of
China (Grant No. 11771244).}


\author{Xingbang Cui \quad\quad Liping Zhang\corref{mycorrespondingauthor}}
\address{Department of Mathematical Sciences, Tsinghua University, Beijing 100084, China}
\cortext[mycorrespondingauthor]{Corresponding author. \emph{Email address}: lipingzhang@tsinghua.edu.cn (Liping Zhang)}

\begin{abstract}
The theory of eigenvalues and eigenvectors is one of the fundamental
and essential components in tensor analysis. Computing the dominant eigenpair of an essentially nonnegative tensor is an important topic in tensor
computation because of the critical applications in network resource allocations. In this paper, we consider the aforementioned topic and there are two main contributions. First, we show that an irreducible essentially nonnegative tensor has a unique positive dominant eigenvalue with a unique positive normalized eigenvector.  Second, we present a homotopy method to compute the dominant eigenpair and prove that it converges to
the desired dominant eigenpair whether the given tensor is irreducible or reducible based on an approximation technique. Finally, we implement the method using
a prediction-correction approach for path following and some numerical results are reported
to illustrate the efficiency of the proposed algorithm.
\end{abstract}

\begin{keyword}
Essentially nonnegative tensor  \sep Dominant eigenvalue\sep  Eigenvector  \sep Homotopy method
\MSC[2010] 65F15 \sep 65H17 \sep 65H20
\end{keyword}

\end{frontmatter}


\section{Introduction}

The purpose of this paper is to study the eigenvalue problem of a special class of tensors. An $m$th order $n$-dimensional real tensor is a hypermatrix of $n^m$ elements, which takes the form
$$
\mathcal{A}=(a_{i_1i_2\ldots i_m}),\quad a_{i_1i_2\ldots i_m}\in \mathbb{R}, \quad i_j\in [n],\quad j\in[m],
$$
where $[n]=\{1,2,\ldots,n\}$. When $m=2$, $\mathcal{A}$ is a matrix in $\mathbb{R}^{n\times n}$. The set of such all $m$th order $n$-dimensional real tensors is denoted as $\mathbb{R}^{[m,n]}$.  A tensor $\mathcal{A}\in\mathbb{R}^{[m,n]}$ is called {\it nonnegative} or {\it positive} if its entries $a_{i_1\ldots i_m}\ge 0$ or $a_{i_1\ldots i_m}> 0$ for all $i_j\in [n]$ and $j\in[m]$. We use $\mathbb{R}_+^{[m,n]}$ to denote the set of all nonnegative tensors. A tensor $\mathcal{A}\in\mathbb{R}^{[m,n]}$ is called {\it symmetric} if its entries $a_{i_1\ldots i_m}$
are invariant under any permutation of their indices $\{i_1\ldots
i_m\}$ \cite{qi05}. The set of all $m$th order $n$-dimensional real symmetric tensors is denoted as $\mathbb{S}^{[m,n]}$. Various properties and applications of tensors, nonnegative tensors in particular, can be found in \cite{Kolda,QiLuo}.

The computation of tensor eigenvalues is of crucial importance in a variety of practical
problems in physics and engineering such as blind source separation \cite{Kofidis} and magnetic resonance
imaging \cite{QiChenChen,QiLuo}. Tensor eigenvalues and eigenvectors have received much attention
lately. Especially, finding the maximum eigenvalue of a tensor is an important topic in tensor
computation and multilinear algebra \cite{QiChenChen,QiLuo,bookwei}. The concept of tensor eigenvalues was independently proposed in \cite{lim05,qi05}. More details on eigenvalues of tensors can be found in \cite{Friedland,QiLuo,yy}.
In this paper, we specifically use the following definition of a tensor eigenpair.

\begin{definition} \label{defeig}
Let $\mathcal{A}\in \mathbb{R}^{[m,n]}$ and  $x\in \mathbb{C}^n$. We define
two $n$-dimensional column vectors
$$\mathcal{A}x^{m-1}:=\left(\sum_{{i_2},\ldots,{i_m}=1}^n a_{i{i_2}\ldots{i_m}}x_{i_2}\cdots{x_{i_m}}\right)_{1\le i\le n}\quad \mbox{and}\quad x^{[m-1]}:=\left(x_i^{m-1}\right)_{1\le i\le n}.$$
If the polynomial system
\begin{equation}\label{eqeig}
\mathcal{A}x^{m-1}=\lambda x^{[m-1]}
\end{equation}
has a solution
$(\lambda,x)\in \mathbb{C}\times (\mathbb{C}^n\backslash \{0\})$, then $\lambda$ is called an
eigenvalue of $\mathcal{A}$ and $x$ its corresponding eigenvector.

We call
$\rho(\mathcal{A})$ the spectral radius of tensor $\mathcal{A}$ if
\begin{displaymath}
\rho(\mathcal{A})=\max\{|\lambda|:\, \text{$\lambda$ is an
eigenvalue of $\mathcal{A}$}\},
\end{displaymath}
where $|\lambda|$ denotes the modulus of $\lambda$.
\end{definition}

Nonnegative tensors, arising from multilinear pagerank, spectral hypergraph theory,
and higher-order Markov chains, form a singularly important class of tensors and
have attracted more and more attention because they share some intrinsic properties with those of
the nonnegative matrices. One of those properties is the Perron-Frobenius theorem on eigenvalues.
The generalization of the Perron-Frobenius theorem
to nonnegative tensors can be found in \cite{ChangPZhang,Friedland,yy}. We recall the concept of irreducibility of a tensor and the Perron-Frobenius theorem for nonnegative tensors \cite{QiLuo}. In the sequel,  let $\mathbb{R}_{++}$ denote the set of all positive real numbers and $\mathbb{R}^n_{++}$ denote the set of all positive column vectors.

\begin{definition} 
A tensor $\mathcal{A}\in \mathbb{R}^{[m,n]}$ is said to be {\it reducible}, if there is a nonempty proper index subset $J\subset[n]$ such that
\begin{equation*}
\mathcal{A}_{i_{1}i_{2}\cdots i_{m}}=0,\ \forall i_{1}\in J,\ \forall i_{2},\cdots,i_{m}\in [n]\backslash J.
\end{equation*}
We say that $\mathcal{A}$ is {\it irreducible} if it is not reducible.
\end{definition}

\begin{theorem} \label{thmpf}Let $\mathcal{A}\in\mathbb{R}^{[m,n]}$ and $\lambda_*=\rho(\mathcal{A})$. If $\mathcal{A}\in\mathbb{R}^{[m,n]}_+$, then $\lambda_*$ is an eigenvalue of $\mathcal{A}$ and it has a nonnegative corresponding eigenvector $x_*$.
If furthermore $\mathcal{A}$ is irreducible, then the following hold:
\begin{itemize}
\item[{\rm(i)}] $\lambda_*>0$ and $x_*\in \mathbb{R}^n_{++}$.
\item[{\rm(ii)}] If $\lambda$ is an eigenvalue with a nonnegative eigenvector, then $\lambda=\lambda_*$. Moreover, the nonnegative eigenvector is unique up to a constant multiple.
\end{itemize}
\end{theorem}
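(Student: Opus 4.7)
The plan is to establish the three parts of the theorem in turn: existence of a nonnegative eigenpair of $\mathcal{A}\in\mathbb{R}^{[m,n]}_+$ with eigenvalue $\rho(\mathcal{A})$, positivity of the eigenvector under irreducibility, and uniqueness of the nonnegative eigenvector up to a positive scalar. The central tools are a perturbation/fixed-point argument together with a Collatz--Wielandt variational characterization of $\rho(\mathcal{A})$.

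For \textbf{existence}, I would first perturb to a strictly positive tensor $\mathcal{A}_\varepsilon := \mathcal{A}+\varepsilon\mathcal{E}$, where $\mathcal{E}$ is the all-ones tensor and $\varepsilon>0$. On the simplex $\Delta=\{x\in\mathbb{R}^n_+:\sum_i x_i=1\}$ define the continuous self-map
\[
T_\varepsilon(x) = \frac{(\mathcal{A}_\varepsilon x^{m-1})^{[1/(m-1)]}}{\|(\mathcal{A}_\varepsilon x^{m-1})^{[1/(m-1)]}\|_1}.
\]
Because $\mathcal{A}_\varepsilon$ is entrywise positive, $T_\varepsilon$ sends $\Delta$ into its relative interior, so Brouwer's fixed-point theorem yields a positive fixed point $x_\varepsilon$, corresponding to a positive eigenpair $(\lambda_\varepsilon,x_\varepsilon)$ of $\mathcal{A}_\varepsilon$. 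The Collatz--Wielandt identity $\rho(\mathcal{A}_\varepsilon)=\max_{x>0}\min_{1\le i\le n}(\mathcal{A}_\varepsilon x^{m-1})_i/x_i^{m-1}$ then identifies $\lambda_\varepsilon$ with $\rho(\mathcal{A}_\varepsilon)$. Sending $\varepsilon\to 0^+$ and extracting a convergent subsequence in the compact simplex, combined with entrywise monotonicity of $\rho(\cdot)$ on $\mathbb{R}^{[m,n]}_+$, produces a limit $(\lambda_*,x_*)$ that is a nonnegative eigenpair of $\mathcal{A}$ with $\lambda_*=\rho(\mathcal{A})$ and $x_*\in\Delta\setminus\{0\}$.

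For \textbf{(i)}, assume $\mathcal{A}$ is irreducible and suppose $J:=\{i:(x_*)_i=0\}$ were a nonempty proper subset of $[n]$. Expanding the $i\in J$ component of $\mathcal{A}x_*^{m-1}=\lambda_* x_*^{[m-1]}=0$ as a sum of nonnegative terms and using $(x_*)_j>0$ for $j\notin J$ would force $a_{i\,i_2\cdots i_m}=0$ whenever $i\in J$ and $i_2,\dots,i_m\in[n]\setminus J$, contradicting irreducibility; hence $x_*\in\mathbb{R}^n_{++}$. Then $\lambda_*>0$ follows from $(\mathcal{A}x_*^{m-1})_i>0$ for at least one $i$, since an irreducible $\mathcal{A}$ with $n\ge 2$ cannot be the zero tensor. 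For \textbf{(ii)}, given any other nonnegative eigenpair $(\mu,y)$ with $y\neq 0$, the same argument gives $y>0$, and $\mathcal{A}y^{m-1}=\mu y^{[m-1]}$ implies $\mu=(\mathcal{A}y^{m-1})_i/y_i^{m-1}$ for every $i$; evaluating Collatz--Wielandt at $y$ yields $\mu=\rho(\mathcal{A})=\lambda_*$. For uniqueness of $x_*$ up to scalar multiple, set $t_*:=\min_i(x_*)_i/y_i$, so that $z:=x_*-t_*y\ge 0$ has at least one zero entry; a multilinear comparison of $\mathcal{A}x_*^{m-1}$ with $(t_*)^{m-1}\mathcal{A}y^{m-1}$, together with irreducibility, then forces $z=0$, i.e., $x_*=t_*y$.

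The hardest step is the \textbf{existence} stage: the Collatz--Wielandt identity is itself a nonlinear Perron--Frobenius fact whose proof must avoid circularity (one typically obtains $\rho\ge\sup$ from the fixed-point construction and $\rho\le\sup$ by applying the definition of spectral radius to an arbitrary eigenpair), and matching $\lambda_\varepsilon$ to $\rho(\mathcal{A})$ in the limit $\varepsilon\to 0^+$ requires care because spectral radii are in general only upper semicontinuous in the tensor entries, and one leans on monotonicity along the entrywise-increasing family $\mathcal{A}_\varepsilon$. The nonlinearity of $x\mapsto \mathcal{A}x^{m-1}$ for $m\ge 3$ also rules out the classical matrix trick of subtracting eigenvalue equations in the uniqueness step, so the comparison argument must be phrased homogeneously throughout.
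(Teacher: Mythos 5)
This theorem is not proved in the paper: it is the Perron--Frobenius theorem for nonnegative tensors, stated as a known background result and attributed to the references (Chang--Pearson--Zhang, Yang--Yang, Friedland--Gaubert--Han, Qi--Luo). So there is no paper proof to compare against; what follows evaluates your argument on its own terms.

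Your route is the standard one from that literature: perturb to a strictly positive tensor $\mathcal{A}_\varepsilon$, apply Brouwer's fixed-point theorem to the normalized map $T_\varepsilon$ on the simplex to get a positive eigenpair, identify the eigenvalue with $\rho(\mathcal{A}_\varepsilon)$ via Collatz--Wielandt, pass to the limit using monotonicity of $\rho$, and then run homogeneous comparison arguments for positivity of $x_*$ and for uniqueness. The existence step, the support argument for $x_*\in\mathbb{R}^n_{++}$, and the uniqueness-up-to-scaling step are all sound once written out: in particular the trick of choosing $t_*=\min_i (x_*)_i/y_i$ so that $z=x_*-t_*y\ge 0$ vanishes on a nonempty proper set $J$, and then reading off from the $i\in J$ components of $\mathcal{A}x_*^{m-1}-t_*^{m-1}\mathcal{A}y^{m-1}=0$ that $a_{i i_2\cdots i_m}=0$ for all $i\in J$, $i_2,\dots,i_m\notin J$, does produce the required contradiction with irreducibility. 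You are also right to flag that monotonicity of $\rho$ and the Collatz--Wielandt identity must be established independently, to avoid circularity.

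There is one genuine gap worth fixing. In establishing $\mu=\lambda_*$ you write that ``evaluating Collatz--Wielandt at $y$ yields $\mu=\rho(\mathcal{A})$,'' but the max-min form
\[
\rho(\mathcal{A})=\max_{x>0}\,\min_{1\le i\le n}\frac{(\mathcal{A}x^{m-1})_i}{x_i^{m-1}}
\]
evaluated at the positive eigenvector $y$ only delivers $\mu\le\rho(\mathcal{A})$, since all ratios equal $\mu$. To close the other direction you need either the companion min-max formula, or (cleaner, and avoiding Collatz--Wielandt here altogether) a direct two-sided comparison with the positive Perron eigenpair $(\lambda_*,x_*)$ you have already produced: scale $y$ so that $y\le x_*$ with equality at some index $i_0$, deduce $(\mathcal{A}y^{m-1})_{i_0}\le(\mathcal{A}x_*^{m-1})_{i_0}$ and hence $\mu\le\lambda_*$; then rescale so that $y\ge x_*$ with equality at some index $i_1$ and obtain $\mu\ge\lambda_*$. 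This removes the reliance on the min-max variant and keeps the eigenvalue-uniqueness step at the same level of technology as your eigenvector-uniqueness step.
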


Under normalization $x_*^Tx_*=1$, the eigenvector $x_*$ is unique. By Theorem \ref{thmpf}, if  a tensor $\mathcal{A}\in\mathbb{R}^{[m,n]}_+$ is irreducible, then it has a unique positive eigenpair $(\lambda_*,x_*)$, which is called {\it Perron pair}. The Perron pair of a nonnegative tensor $\mathcal{A}$  plays an important role in various
applications, such as in the spectral hypergraph theory, higher order Markov chains, and automatic control, to name a few  \cite{QiLuo}. Several algorithms have been proposed for finding the Perron pair of a nonnegative tensor in the literature. Ng, Qi, and
Zhou \cite{NgQiZhou} proposed a power-type method. The convergence of this method is established in \cite{Chang2011,zlplinear}. Modified versions of this method have been proposed in \cite{Liu2010,Zhang2012,Zhou2013}. To achieve faster convergence, Ni and Qi \cite{NiQi} proposed Newton-type algorithm to solve a polynomial system. Their
algorithm is proved to be locally quadratically convergent when the nonnegative tensor is irreducible.  Liu, Guo, and Lin \cite{LiuGuoLin} recently proposed an algorithm
that combines Newton's and Noda's iterations for third order nonnegative tensors. This algorithm preserves positivity and
is shown to be quadratically convergent to the Perron pair for irreducible nonnegative tensors. Based on Definition \ref{defeig}, finding the eigenpair is equivalent to solving polynomial system (\ref{eqeig}). Since the homopoty method is an attractive class of  methods for solving polynomial systems \cite{Chen2016}, Chen {\it et al} \cite{Chen2019} proposed a homotopy method, based on the algorithm in \cite{Chen2016}, to compute the largest eigenvalue and a
corresponding eigenvector of a nonnegative tensor. They proved that it converges to the
desired eigenpair when the tensor is irreducible.

In this paper, we consider a new class of tensors called essentially nonnegative tensors \cite{Zhang2013},
which extends the nonnegative tensors. We focus on computing the dominant eigenvalue of an essentially nonnegative tensor, which is closely related to the largest eigenvalue of a nonnegative tensor.  The dominant eigenvalue of an essentially nonnegative tensor has many important applications in network resource allocation \cite{ToRoy18} and trace-preserving problems \cite{Zhang2013}. Zhang {\it et al} \cite{Zhang2013} first introduced this class of tensors and studied its maximum real eigenvalue, which is called dominant eigenvalue. They also proposed a power-type method to compute the dominant eigenvalue of an essentially nonnegative tensor in \cite{Zhang2013}, which has linear convergence at most. Hu {\it et al} \cite{Hu2013} proposed a semidefinite programming method to find the dominant eigenvalue of an essentially nonnegative symmetric tensor via solving a sum of squares
of polynomials optimization problem. The method in \cite{Hu2013} are not suitable
for finding the dominant eigenvalue of a large size essentially nonnegative tensor. In this paper, based on the homotopy method in \cite{Chen2019}, we propose a homotopy method for computing the dominant eigenvalue of an essentially
nonnegative tensor. This method is suitable for large size tensors.

This paper is organized as follows. In Section 2, we show that an irreducible essentially nonnegative tensor has the unique dominant eigenvalue with the corresponding unique positive eigenvector. In Section 3, we propose a homotopy method and prove its convergence. We
describe an implementation of the method and give some numerical results in Section 4. Some conclusions are given
in Section 5.

\section{Preliminaries and the unique dominant eigenpair}
\label{sec:1}
We start this section with some fundamental notions and properties
on tensors, see \cite{QiLuo} for more details. We also introduce an important result for eigenvalues of an essentially nonnegative tensor.

The $m$th order $n$-dimensional {\it unit tensor}, denoted by $\mathcal{I}$, is the tensor whose entries are
$\delta_{i_1\ldots i_m}$ with $\delta_{i_1\ldots i_m}=1$ if and only if $i_1=\cdots=i_m$ and otherwise zero. The symbol $\mathcal{A}\ge \mathcal{B}$ means that $\mathcal{A}-\mathcal{B}$ is a nonnegative tensor. We recall the concept of an essentially nonnegative tensor which was first introduced in \cite{Zhang2013}.
\begin{definition} Let $\mathcal{A}\in\mathbb{R}^{[m,n]}$. We say that $\mathcal{A}$ is an {\it essentially nonnegative tensor} if and only if its off-diagonal entries are all nonnegative.
\end{definition}

From the definition, clearly any nonnegative tensor is essentially nonnegative, while the converse
may not be true in general. When the order $m=2$, the definition collapses to the
classical definition of essentially nonnegative matrices \cite{Horn}.
Similar to the essentially nonnegative matrix, an essentially nonnegative tensor $\mathcal{A}$ has a real eigenvalue
with the property that it is greater than or equal to the real part of every eigenvalue of $\mathcal{A}$. The following result was given in \cite{Zhang2013}.
\begin{theorem} \label{thmdoeig} Let $\mathcal{A}\in\mathbb{R}^{[m,n]}$ be an essentially nonnegative tensor and
\begin{equation}\label{alph}\alpha=\max_{i\in[n]}|a_{i\ldots i}|+1.
\end{equation} Then we have
$\mathcal{A}+\alpha\mathcal{I}\in \mathbb{R}^{[m,n]}_+$. Moreover, $\mathcal{A}$ has a real eigenvalue $\lambda(\mathcal{A})$ with corresponding nonnegative eigenvector and $\lambda(\mathcal{A})\ge {\rm Re}\lambda$ for every eigenvalue $\lambda$ of $\mathcal{A}$. Furthermore,
\begin{equation*}\label{relation}
\lambda(\mathcal{A})=\rho(\mathcal{A}+\alpha\mathcal{I})-\alpha.
\end{equation*}
\end{theorem}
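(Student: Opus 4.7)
The plan is to exploit the shift trick that the definition of essential nonnegativity was designed for: adding $\alpha\mathcal{I}$ transforms $\mathcal{A}$ into an honest nonnegative tensor, so the ordinary Perron--Frobenius result (Theorem \ref{thmpf}) can be applied and then translated back to $\mathcal{A}$.

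First, I would verify that $\mathcal{A}+\alpha\mathcal{I}\in\mathbb{R}^{[m,n]}_+$. The off-diagonal entries are unchanged by the shift and are nonnegative by the essential nonnegativity of $\mathcal{A}$. For each diagonal entry $a_{i\ldots i}+\alpha$, the choice $\alpha=\max_{i\in[n]}|a_{i\ldots i}|+1$ yields $a_{i\ldots i}+\alpha\ge a_{i\ldots i}+|a_{i\ldots i}|+1\ge 1>0$, so the shifted tensor is even positive on the diagonal.

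Next, I would establish the eigenpair correspondence. Since $\mathcal{I}x^{m-1}=x^{[m-1]}$, for any $x\in\mathbb{C}^n\setminus\{0\}$ and any $\lambda\in\mathbb{C}$,
\begin{equation*}
\mathcal{A}x^{m-1}=\lambda x^{[m-1]}\ \Longleftrightarrow\ (\mathcal{A}+\alpha\mathcal{I})x^{m-1}=(\lambda+\alpha)x^{[m-1]},
\end{equation*}
so the eigenvalues of $\mathcal{A}$ and $\mathcal{A}+\alpha\mathcal{I}$ are in bijective correspondence via $\lambda\mapsto\lambda+\alpha$, with the same eigenvectors. Applying Theorem \ref{thmpf} to the nonnegative tensor $\mathcal{A}+\alpha\mathcal{I}$, its spectral radius $\rho(\mathcal{A}+\alpha\mathcal{I})$ is an eigenvalue with a nonnegative eigenvector $x_*$. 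Define $\lambda(\mathcal{A}):=\rho(\mathcal{A}+\alpha\mathcal{I})-\alpha$; by the correspondence, $\lambda(\mathcal{A})$ is a real eigenvalue of $\mathcal{A}$ whose eigenvector $x_*$ is nonnegative, which gives both the existence claim and the identity relating $\lambda(\mathcal{A})$ to $\rho(\mathcal{A}+\alpha\mathcal{I})$.

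Finally, to prove the dominance property, I would take any eigenvalue $\lambda$ of $\mathcal{A}$, so that $\lambda+\alpha$ is an eigenvalue of $\mathcal{A}+\alpha\mathcal{I}$ and hence $|\lambda+\alpha|\le\rho(\mathcal{A}+\alpha\mathcal{I})$. Using the elementary inequality $\operatorname{Re}(z)\le|z|$ with $z=\lambda+\alpha$, and the fact that $\alpha$ is real, I obtain
\begin{equation*}
\operatorname{Re}\lambda+\alpha=\operatorname{Re}(\lambda+\alpha)\le|\lambda+\alpha|\le\rho(\mathcal{A}+\alpha\mathcal{I})=\lambda(\mathcal{A})+\alpha,
\end{equation*}
so $\operatorname{Re}\lambda\le\lambda(\mathcal{A})$, as required. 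None of the steps looks genuinely hard: the only thing that needs a moment of care is making sure the shift is large enough to produce a nonnegative tensor (handled by the ``$+1$'' in the definition of $\alpha$) and that the inequality $\operatorname{Re}(\lambda+\alpha)\le|\lambda+\alpha|$ yields the desired dominance after subtracting the real quantity $\alpha$ — the whole result is essentially a corollary of Perron--Frobenius applied to the shifted tensor.
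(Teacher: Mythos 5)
Your proof is correct. The paper itself does not reprove Theorem~\ref{thmdoeig} --- it cites \cite{Zhang2013} for it --- but the argument you give is exactly the standard shift argument that the statement is designed around: $\alpha$ is chosen to make the diagonal entries nonnegative, the identity $\mathcal{I}x^{m-1}=x^{[m-1]}$ gives the exact eigenvalue correspondence $\lambda\mapsto\lambda+\alpha$, Perron--Frobenius (Theorem~\ref{thmpf}) applied to $\mathcal{A}+\alpha\mathcal{I}$ produces the nonnegative eigenpair, and $\operatorname{Re}(\lambda+\alpha)\le|\lambda+\alpha|\le\rho(\mathcal{A}+\alpha\mathcal{I})$ yields the dominance; all four claims follow cleanly and no step is missing.
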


We call such an eigenvalue $\lambda(\mathcal{A})$ in Theorem \ref{thmdoeig} the {\it dominant eigenvalue} of an essentially nonnegative tensor $\mathcal{A}$.

We also use the following results in the sequel, which can be found in \cite{yy,Zhang2013,zhangmtensor}.
\begin{lemma}   \label{lem2.2}
Let  $\mathcal{A}\in \mathbb{R}^{[m,n]}_+$  and $\varepsilon>0$ be a sufficiently small number. If
$\mathcal{A}_\varepsilon= \mathcal{A}+ \mathcal{E}$ where
$\mathcal{E}$ denotes the tensor with every entry being
$\varepsilon$, then
\begin{equation*}
\lim_{\varepsilon\downarrow
0}\rho(\mathcal{A}_\varepsilon)=\rho(\mathcal{A}).
\end{equation*}
If furthermore $\mathcal{A}\in \mathbb{S}^{[m,n]}$, then
$$
0\le \rho(\mathcal{A}_\varepsilon)-\rho(\mathcal{A})\le \varepsilon n^{m-1}.
$$
\end{lemma}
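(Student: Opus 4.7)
The plan is to handle the two statements of the lemma in sequence, sharing a common monotonicity step. The one-sided bound $\rho(\mathcal{A}_\varepsilon)\ge\rho(\mathcal{A})$ is immediate from the monotonicity of the spectral radius on $\mathbb{R}^{[m,n]}_+$, which is a consequence of Theorem \ref{thmpf} via Collatz--Wielandt: since $\mathcal{A}_\varepsilon-\mathcal{A}=\mathcal{E}\ge0$ is nonnegative, any positive Collatz--Wielandt test vector for $\mathcal{A}$ produces a lower bound for $\rho(\mathcal{A}_\varepsilon)$ at least as large. This supplies the left half of the symmetric estimate and one half of the limit claim.

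For the full limit $\lim_{\varepsilon\downarrow0}\rho(\mathcal{A}_\varepsilon)=\rho(\mathcal{A})$, I would exploit that $\mathcal{A}_\varepsilon$ is strictly positive, hence irreducible, and therefore has a unique positive Perron eigenvector $x_\varepsilon$, normalized so that $\|x_\varepsilon\|_m:=(\sum_i x_{\varepsilon,i}^m)^{1/m}=1$. By monotonicity, $\rho(\mathcal{A}_\varepsilon)$ is nondecreasing in $\varepsilon$ and bounded below by $\rho(\mathcal{A})$, so it decreases to some limit $\mu\ge\rho(\mathcal{A})$ as $\varepsilon\downarrow0$. Along a subsequence $\varepsilon_k\downarrow0$ extracted by compactness, $x_{\varepsilon_k}\to x_\infty\ge0$ with $\|x_\infty\|_m=1$; passing to the limit in the eigenpair equation $\mathcal{A}_{\varepsilon_k}x_{\varepsilon_k}^{m-1}=\rho(\mathcal{A}_{\varepsilon_k})x_{\varepsilon_k}^{[m-1]}$ yields $\mathcal{A}x_\infty^{m-1}=\mu\,x_\infty^{[m-1]}$, so $\mu$ is an eigenvalue of $\mathcal{A}$ with a nonnegative eigenvector. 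Hence $\mu\le\rho(\mathcal{A})$, forcing $\mu=\rho(\mathcal{A})$.

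For the quantitative symmetric estimate, I would invoke the variational identity
$$\rho(\mathcal{A})=\max_{x\ge0,\,\|x\|_m=1}\mathcal{A}x^m,$$
valid for symmetric nonnegative tensors. Since $\mathcal{A}_\varepsilon=\mathcal{A}+\mathcal{E}$ is again symmetric nonnegative, subadditivity of the maximum gives
$$\rho(\mathcal{A}_\varepsilon)\le\rho(\mathcal{A})+\max_{x\ge0,\,\|x\|_m=1}\mathcal{E}x^m.$$
A direct expansion yields $\mathcal{E}x^m=\varepsilon(\sum_i x_i)^m$, and Hölder's inequality with conjugate exponents $m$ and $m/(m-1)$ gives $\sum_i x_i\le n^{(m-1)/m}\|x\|_m=n^{(m-1)/m}$, so $\mathcal{E}x^m\le\varepsilon n^{m-1}$ on the feasible set; combining with the previous inequality yields the desired upper bound.

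The step I expect to be the main obstacle is rigorously justifying the variational identity for symmetric nonnegative tensors that are \emph{not} irreducible: the maximizer may then sit on the boundary of the nonnegative orthant, where only KKT conditions are available in place of a clean Lagrange identification as an H-eigenpair. This can be invoked directly from the references \cite{yy,Zhang2013,zhangmtensor} or recovered by a small irreducible perturbation argument, but in the latter case one must take care to avoid circularity with the limit being established in the first half of the lemma.
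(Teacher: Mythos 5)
The paper does not prove Lemma~\ref{lem2.2}; it is imported verbatim from the references \cite{yy,Zhang2013,zhangmtensor}, so there is no in-paper argument to compare against. Your reconstruction is correct and is the standard one. Two small remarks. For the limit claim, the monotonicity $\rho(\mathcal{A}_\varepsilon)\ge\rho(\mathcal{A})$ can be obtained directly from the paper's own Lemma~\ref{lem2.3} (which even gives strict inequality since $\mathcal{A}_\varepsilon$ is irreducible) rather than from a Collatz--Wielandt argument the paper never states; the compactness step and limit passage are then exactly right, and the normalization $\|x_\infty\|_m=1$ guarantees $x_\infty\ne 0$ so the limiting eigenequation is nontrivial. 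For the symmetric bound, the issue you flag at the boundary of the nonnegative orthant in fact resolves itself without any perturbation (hence without circularity): at a maximizer $x^*$ of $\mathcal{A}x^m$ over $\{x\ge 0,\ \sum_i x_i^m=1\}$ the KKT conditions give $\mathcal{A}(x^*)^{m-1}_i=\rho^*(x^*_i)^{m-1}$ on active coordinates $x^*_i>0$ and $\mathcal{A}(x^*)^{m-1}_i\le 0$ where $x^*_i=0$, but nonnegativity of $\mathcal{A}$ and $x^*$ forces $\mathcal{A}(x^*)^{m-1}_i\ge 0$ there too, so the eigenequation holds on every coordinate, $x^*$ is a nonnegative $H$-eigenvector, $\rho^*\le\rho(\mathcal{A})$, and the Perron vector gives the reverse inequality. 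With the variational identity secured, your subadditivity step, the expansion $\mathcal{E}x^m=\varepsilon\bigl(\sum_i x_i\bigr)^m$, and H\"older complete the quantitative estimate.
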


\begin{lemma}\label{lem2.3}
Let $\mathcal{A},\mathcal{B}\in \mathbb{R}^{[m,n]}_+$. If $\mathcal{B}$ is irreducible, $\mathcal{A}\le \mathcal{B}$ and $\mathcal{A}\neq \mathcal{B}$, then $\rho(\mathcal{A})<\rho(\mathcal{B})$.
\end{lemma}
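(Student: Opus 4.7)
The plan is to establish the weak bound $\rho(\mathcal{A})\le\rho(\mathcal{B})$ by plugging the Perron vector of $\mathcal{B}$ into a Collatz--Wielandt inequality for $\mathcal{A}$, and then rule out equality by exploiting that $\mathcal{B}-\mathcal{A}$ is a nonzero nonnegative tensor together with the positivity of the Perron vector.

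First, by Theorem~\ref{thmpf} applied to the irreducible nonnegative tensor $\mathcal{B}$ there is a positive Perron pair $(\rho(\mathcal{B}),y)$ with $y\in\mathbb{R}^n_{++}$ and $\mathcal{B}y^{m-1}=\rho(\mathcal{B})\,y^{[m-1]}$, and applied to the (possibly reducible) tensor $\mathcal{A}$ there is a nonnegative nonzero eigenvector $x$ with $\mathcal{A}x^{m-1}=\rho(\mathcal{A})\,x^{[m-1]}$. Using $y$ as a test vector for $\mathcal{A}$, the componentwise inequality $\mathcal{A}y^{m-1}\le\mathcal{B}y^{m-1}=\rho(\mathcal{B})\,y^{[m-1]}$ combined with the Collatz--Wielandt upper bound
\[
\rho(\mathcal{A})\le\max_{i\in[n]}\frac{(\mathcal{A}y^{m-1})_i}{y_i^{m-1}}
\]
immediately gives $\rho(\mathcal{A})\le\rho(\mathcal{B})$.

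Second, I would rule out equality by contradiction. Assume $\rho(\mathcal{A})=\rho(\mathcal{B})=:\lambda$. Then
\[
\mathcal{B}x^{m-1}-\lambda x^{[m-1]}=(\mathcal{B}-\mathcal{A})\,x^{m-1}\ge 0,
\]
so the nonnegative nonzero vector $x$ satisfies $\mathcal{B}x^{m-1}\ge\rho(\mathcal{B})\,x^{[m-1]}$. Invoking the refined Perron--Frobenius/Collatz--Wielandt result for irreducible nonnegative tensors (see \cite{yy,Zhang2013,zhangmtensor}), this forces equality $\mathcal{B}x^{m-1}=\rho(\mathcal{B})\,x^{[m-1]}$ and, by the uniqueness clause of Theorem~\ref{thmpf}(ii), $x$ must be a positive multiple of $y$; in particular $x>0$. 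Subtracting the two eigenvalue equations then gives $(\mathcal{B}-\mathcal{A})\,x^{m-1}=0$. However, since $\mathcal{B}-\mathcal{A}$ is nonzero and nonnegative, there is some index $(i_1^*,i_2^*,\ldots,i_m^*)$ with $b_{i_1^*i_2^*\ldots i_m^*}-a_{i_1^*i_2^*\ldots i_m^*}>0$, and with $x>0$ the $i_1^*$-component of $(\mathcal{B}-\mathcal{A})\,x^{m-1}$ is bounded below by $(b_{i_1^*i_2^*\ldots i_m^*}-a_{i_1^*i_2^*\ldots i_m^*})\,x_{i_2^*}\cdots x_{i_m^*}>0$, contradicting $(\mathcal{B}-\mathcal{A})\,x^{m-1}=0$.

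The main obstacle is the strong Collatz--Wielandt step that upgrades $\mathcal{B}x^{m-1}\ge\rho(\mathcal{B})\,x^{[m-1]}$ to an equality for a nonnegative nonzero $x$: for a merely nonnegative (non-irreducible) tensor this implication can fail, so the step genuinely uses the irreducibility hypothesis on $\mathcal{B}$ and is the deepest ingredient of the proof. Once this lemma is granted, everything else is routine linear-algebraic manipulation.
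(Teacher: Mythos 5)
The paper does not actually prove Lemma~\ref{lem2.3}; it simply states it and cites \cite{yy,Zhang2013,zhangmtensor}, so there is no internal proof to compare against. Judged on its own, your reconstruction is sound and follows the standard comparison argument from those references: first the weak bound $\rho(\mathcal{A})\le\rho(\mathcal{B})$ by testing the Collatz--Wielandt max-ratio against the positive Perron vector $y$ of $\mathcal{B}$, then a contradiction under equality. The choice to run the contradiction step through the nonnegative eigenvector $x$ of $\mathcal{A}$ (rather than through $y$) is the right one: since $\mathcal{A}$ may be reducible, a strict componentwise inequality $\mathcal{A}y^{m-1}\lneq\rho(\mathcal{B})y^{[m-1]}$ with $y>0$ would \emph{not} by itself force $\rho(\mathcal{A})<\rho(\mathcal{B})$, so working with a supersolution of the irreducible tensor $\mathcal{B}$ is what makes the argument close.

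You correctly flag the one genuinely nontrivial step, namely that for irreducible $\mathcal{B}$, a nonnegative nonzero $x$ with $\mathcal{B}x^{m-1}\ge\rho(\mathcal{B})x^{[m-1]}$ must satisfy $x>0$ and equality. Since you are invoking rather than proving it, let me record why it holds, because it is exactly where the irreducibility of $\mathcal{B}$ is consumed. Let $y>0$ be the Perron vector of $\mathcal{B}$, put $t=\max_i x_i/y_i>0$, attained at $i_0$, and set $S=\{i:\ x_i=ty_i\}\ni i_0$. For any $i\in S$ one has
\[
(\mathcal{B}x^{m-1})_i \;\ge\; \rho(\mathcal{B})x_i^{m-1} \;=\; \rho(\mathcal{B})(ty_i)^{m-1} \;=\; \bigl(\mathcal{B}(ty)^{m-1}\bigr)_i \;\ge\; (\mathcal{B}x^{m-1})_i,
\]
so all inequalities are equalities; reading the middle equality termwise (each summand of $\mathcal{B}(ty)^{m-1}-\mathcal{B}x^{m-1}$ at index $i$ is nonnegative and they sum to zero) and using $y>0$ shows that $b_{ii_2\dots i_m}>0$ with $i\in S$ forces $i_2,\dots,i_m\in S$. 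If $S$ were a proper subset of $[n]$ this would contradict the irreducibility of $\mathcal{B}$, so $S=[n]$, i.e.\ $x=ty>0$ and $\mathcal{B}x^{m-1}=\rho(\mathcal{B})x^{[m-1]}$. With that lemma in hand, the rest of your argument (uniqueness of the Perron direction from Theorem~\ref{thmpf}(ii), then $(\mathcal{B}-\mathcal{A})x^{m-1}=0$ contradicting $\mathcal{B}\ne\mathcal{A}$, $\mathcal{B}\ge\mathcal{A}$, $x>0$) is correct. So your proposal is a valid proof, modulo the cited Perron--Frobenius ingredient that the paper itself also takes as given.
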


By Lemma \ref{lem2.3} and Theorems \ref{thmpf} and \ref{thmdoeig}, we have the following theorem.
\begin{theorem}\label{uniqthm}
Let $\mathcal{A}\in\mathbb{R}^{[m,n]}$ be an essentially nonnegative tensor and $\alpha$ be defined by (\ref{alph}). Then $\mathcal{A}$ has a nonnegative eigenpair $(\lambda(\mathcal{A}),x)$. If furthermore $\mathcal{A}$ is irreducible, then every nonnegative eigenpair $(\lambda(\mathcal{A}),x)$ is positive. Moreover, $\mathcal{A}$ has a unique positive dominant eigenvalue $\lambda(\mathcal{A})$, and the nonnegative eigenvector is unique  up to a constant multiple.
\end{theorem}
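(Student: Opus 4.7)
The plan is to reduce every claim about $\mathcal{A}$ to the corresponding Perron--Frobenius statement about the shifted tensor $\mathcal{B}:=\mathcal{A}+\alpha\mathcal{I}$, which by Theorem~\ref{thmdoeig} lies in $\mathbb{R}^{[m,n]}_+$ and satisfies $\lambda(\mathcal{A})=\rho(\mathcal{B})-\alpha$. The working dictionary is the elementary identity
\[
\mathcal{B}x^{m-1}\;=\;\mathcal{A}x^{m-1}+\alpha\,x^{[m-1]},
\]
so $(\lambda,x)$ is an eigenpair of $\mathcal{A}$ if and only if $(\lambda+\alpha,x)$ is an eigenpair of $\mathcal{B}$.

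First, to produce a nonnegative eigenpair of $\mathcal{A}$, I would apply Theorem~\ref{thmpf} to $\mathcal{B}\in\mathbb{R}^{[m,n]}_+$ and extract a nonnegative eigenvector $x_*$ attached to the eigenvalue $\rho(\mathcal{B})$. The translation identity then immediately makes $(\lambda(\mathcal{A}),x_*)$ a nonnegative eigenpair of $\mathcal{A}$, which settles the first sentence of the theorem.

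For the irreducible case, the pivotal observation is that $\mathcal{A}$ is irreducible if and only if $\mathcal{B}$ is irreducible. Indeed, the reducibility condition constrains entries $a_{i_1 i_2\cdots i_m}$ with $i_1\in J$ and $i_2,\ldots,i_m\in[n]\setminus J$; since $J$ is nonempty and proper, this forces $i_1\neq i_2$, so the entries in question are off-diagonal and therefore unaffected by adding $\alpha\mathcal{I}$. With $\mathcal{B}$ irreducible, Theorem~\ref{thmpf}(i) gives $x_*\in\mathbb{R}^{n}_{++}$. For the uniqueness assertions, let $(\mu,y)$ be any nonnegative eigenpair of $\mathcal{A}$ with $y\neq 0$; then $(\mu+\alpha,y)$ is a nonnegative eigenpair of $\mathcal{B}$, so Theorem~\ref{thmpf}(ii) forces $\mu+\alpha=\rho(\mathcal{B})$, i.e.\ $\mu=\lambda(\mathcal{A})$, and simultaneously forces $y$ to be a positive scalar multiple of $x_*$, hence strictly positive.

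The main (and essentially only) obstacle is justifying that irreducibility transfers across the shift $\mathcal{A}\mapsto\mathcal{B}$; once this is in place, every assertion follows mechanically by translating the Perron--Frobenius conclusions about $\mathcal{B}$ back through $\lambda(\mathcal{A})=\rho(\mathcal{B})-\alpha$. No independent sign arguments, Collatz--Wielandt-type estimates, or limiting perturbations (Lemma~\ref{lem2.2}) are needed here -- those tools are reserved for the reducible case treated later in the paper.
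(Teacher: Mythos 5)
Your proof follows the same translate-to-the-shifted-nonnegative-tensor strategy as the paper and correctly establishes existence of a nonnegative eigenpair, strict positivity of the eigenvector in the irreducible case, and uniqueness of the eigenvalue and of the eigenvector up to scaling among all nonnegative eigenpairs; your explicit justification that irreducibility is preserved by $\mathcal{A}\mapsto\mathcal{A}+\alpha\mathcal{I}$ (the reducibility pattern involves only off-diagonal entries) is in fact more careful than the paper, which asserts this without comment. What you do not address is the assertion bundled into ``unique \emph{positive} dominant eigenvalue,'' which the paper reads as $\lambda(\mathcal{A})>0$: the paper argues by setting $\mathcal{B}=\alpha\mathcal{I}$, $\mathcal{C}=\mathcal{A}+\alpha\mathcal{I}$, claiming $\mathcal{B}\le\mathcal{C}$ and $\mathcal{B}\ne\mathcal{C}$, and invoking Lemma~\ref{lem2.3} to get $\alpha=\rho(\mathcal{B})<\rho(\mathcal{C})$, whence $\lambda(\mathcal{A})=\rho(\mathcal{C})-\alpha>0$.

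Relative to the statement as written, that omission is therefore a genuine gap in your proof. However, the paper's own step is not valid, and the claim is not actually true. The comparison $\mathcal{B}\le\mathcal{C}$ amounts to $\mathcal{A}\ge 0$, which fails whenever $\mathcal{A}$ has a negative diagonal entry --- precisely the situation that distinguishes essentially nonnegative tensors from nonnegative ones --- so Lemma~\ref{lem2.3} does not apply. A concrete counterexample: take $m=n=2$ with $a_{11}=a_{22}=-10$ and $a_{12}=a_{21}=1$. This matrix is irreducible and essentially nonnegative, $\alpha=11$, $\mathcal{A}+\alpha\mathcal{I}$ is the all-ones matrix with spectral radius $2$, and $\lambda(\mathcal{A})=2-11=-9<0$. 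So your instinct to avoid producing an independent sign argument was sound: what is true, and what you have proved, is that $\lambda(\mathcal{A})$ is the unique eigenvalue of $\mathcal{A}$ admitting a nonnegative eigenvector, and that eigenvector is strictly positive and unique up to scaling.
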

\begin{proof} By Theorems \ref{thmpf} and \ref{thmdoeig}, $\lambda(\mathcal{A})\ge 0$ and it corresponds to a nonnegative eigenvector. Let $\mathcal{B}=\alpha\mathcal{I}$ and $\mathcal{C}=\mathcal{A}+\alpha\mathcal{I}$. Then $\mathcal{B}\le \mathcal{C}$ and $\mathcal{B}\ne \mathcal{C}$.
Since $\mathcal{A}$ is irreducible, $\mathcal{C}$ is also irreducible. By Lemma \ref{lem2.3}, we have
$$
\alpha=\rho(\mathcal{B})<\rho(\mathcal{C}),
$$
which implies that $\lambda(\mathcal{A})>0$. By Theorem \ref{thmpf}, $\mathcal{A}$ has a unique positive normalized eigenpair.
\end{proof}

Theorem \ref{uniqthm} shows that an irreducible essentially nonnegative tensor has a unqiue positive eigenpair. We call it {\it dominant eigenpair}, denoted as $(\lambda(\mathcal{A}),x(\mathcal{A}))$. According to Theorem \ref{thmdoeig} and \ref{uniqthm}, eigenpair $(\lambda(\mathcal{A})+\alpha,x(\mathcal{A}))$ coincides with Perron pair $(\rho(\mathcal{A}+\alpha\mathcal{I}),x)$ for nonnegative tensor $\mathcal{A}+\alpha\mathcal{I}$ with eigenvectors normalized, which can be calculated via the homotopy method in \cite{Chen2019}. This is the core of motivation of our paper. In the next section, we will propose a homotopy method to compute the  dominant eigenpair for an essentially nonnegative tensor.

\section{A homotopy method}

In this section, our goal is to compute the dominant eigenpair of an irreducible essentially nonnegative tensor $\mathcal{A}$ by a homotopy method. We first assume that $\mathcal{A}$ is irreducible and let $\mathcal{T}=\mathcal{A}+\alpha\mathcal{I}$ where $\alpha$ is defined by (\ref{alph}).

For the purpose of computing the dominant eigenpair of $\mathcal{A}$, we will solve the following problem
\begin{equation}\label{Qlx}
Q(\lambda,x)=\begin{pmatrix}
\mathcal{T}x^{m-1}-\lambda x^{[m-1]}\\
x^{T}x-1
\end{pmatrix}=0.
\end{equation}
Choose $a,b\in \mathbb{R}^n_{++}$ and define a positive tensor
\begin{equation}\label{that}
\mathcal{S}=a^{[m-1]}\circ b\circ\cdots\circ b\in \mathbb{R}^{[m,n]}
\end{equation}
with its entry $s_{i_1i_2\ldots i_m}=a_{i_1}^{m-1}b_{i_2}\cdots b_{i_m}$, where $\circ$ denotes the outer product. Clearly,
\begin{equation}\label{eqzero}
\lambda_0=(a^Tb)^{m-1}, \quad x_0=\frac{a}{\|a\|_2}
 \end{equation}
 is the Perron pair of $\mathcal{S}$ \cite[Lemma 2.1]{Chen2019}. For the positive tensor $\mathcal{S}$, we give the start system
\begin{equation}\label{Plx}
P(\lambda,x)=\begin{pmatrix}
\mathcal{S}x^{m-1}-\lambda x^{[m-1]}\\
x^{T}x-1
\end{pmatrix}=0.
\end{equation}
Thus, we construct the following homotopy
\begin{equation}\label{Htau}
H_{\tau}(\lambda,x)=(1-\tau)P(\lambda,x)+\tau Q(\lambda,x)=0,\quad \tau\in[0,1],
\end{equation}
i.e.,
$$
H_{\tau}(\lambda,x)=\begin{pmatrix}
(\tau\mathcal{T}+(1-\tau)\mathcal{S})x^{m-1}-\lambda x^{[m-1]}\\
x^{T}x-1
\end{pmatrix}=0.
$$

In designing a homotopy method for computing the dominant eigenpair, the Jacobian matrix of $H_{\tau}(\lambda,x)$ plays an important role. We now focus on computing this matrix.
By \cite[Lemma 2.1]{NiQi}, using the semi-symmetric technique given in \cite{NiQi}, there exists the unique semi-symmetric tensor ${\mathcal{A}}_s\in \mathbb{R}^{[m,n]}$ for a general tensor $\mathcal{A}\in \mathbb{R}^{[m,n]}$ such that ${\mathcal{A}}_sx^{m-1}=\mathcal{A}x^{m-1}$
for all $x\in \mathbb{R}^n$.  By \cite[Lemma 3.3]{NiQi}, the Jacobian matrix of $\mathcal{A}x^{m-1}$  is given by
\begin{equation}\label{JF}
J\mathcal{A}x^{m-1}=(m-1){\mathcal{A}}_sx^{m-2}.
\end{equation}
Here, for a tensor $\mathcal{A}=(a_{i_1\ldots i_m})\in \mathbb{R}^{[m,n]}$ and a
vector $x\in \mathbb{R}^n$,  $\mathcal{A}{x}^{m-2}$ is a matrix
in $\mathbb{R}^{n\times n}$ whose $(i,j)$-th component is defined by
$$
\left(\mathcal{A}{x}^{m-2}\right)_{ij}=\sum_{i_3=1}^n\cdots
\sum_{i_m=1}^n a_{iji_3\ldots i_m}x_{i_3}\cdots x_{i_m}.
$$
Note that the tensor $\mathcal{S}$ is semi-symmetric. Thus, it follows from (\ref{JF}) that the Jacobian matrix $JH_{\tau}(\lambda,x)\in \mathbb{R}^{(n+1)\times(n+1)}$ of  $H_{\tau}(\lambda,x)$ is given by
\begin{equation}\label{JH}
JH_{\tau}(\lambda,x)=\begin{pmatrix}
-x^{[m-1]}\quad\quad\quad & (m-1)[\tau{\mathcal{T}}_s+(1-\tau)\mathcal{S}-\lambda\mathcal{I}]x^{m-2}\\
0 \quad\quad\quad & 2x^T
\end{pmatrix},
\end{equation}
where ${\mathcal{T}}_s$ is the corresponding semi-symmetric tensor of ${\mathcal{T}}$. Hence, we have the following theorem.
\begin{theorem} \label{thmrank}Let $\mathcal{A}\in\mathbb{R}^{[m,n]}$ be an essentially nonnegative tensor and $\mathcal{T}=\mathcal{A}+\alpha\mathcal{I}$ where $\alpha$ is defined by (\ref{alph}). Let $a,b\in \mathbb{R}_{++}$ and the positive tensor $\mathcal{S}$ be defined by (\ref{that}). Then, we have the following results.
\begin{itemize}
\item[{\rm(i)}] For any $\tau\in[0,1)$, $H_{\tau}(\lambda,x)=0$, defined by (\ref{Htau}),  has a unique solution $(\lambda(\tau),x(\tau))\in \mathbb{R}_{++}\times \mathbb{R}_{++}^n$, which is the Perron pair of the positive tensor $\tau\mathcal{T}+(1-\tau)\mathcal{S}$. Moreover, the Jacobian matrix $JH_{\tau}(\lambda(\tau),x(\tau))$ defined as (\ref{JH}), i.e.,
 \begin{equation*}
JH_{\tau}(\lambda(\tau),x(\tau))=\begin{pmatrix}
-x(\tau)^{[m-1]}\quad\quad\quad & (m-1)[\tau{\mathcal{T}}_s+(1-\tau)\mathcal{S}-\lambda(\tau)\mathcal{I}]x(\tau)^{m-2}\\
0 \quad\quad\quad & 2x(\tau)^T
\end{pmatrix},
\end{equation*}
     is nonsingular.
\item[{\rm(ii)}] If furthermore $\mathcal{A}$ is irreducible, then $H_1(\lambda,x)=Q(\lambda,x)=0$ has a unique solution $(\lambda_*,x_*)=(\lambda(1),x(1))\in \mathbb{R}_{++}\times \mathbb{R}_{++}^n$. Moreover, the Jacobian matrix
$$JH_1(\lambda_*,x_*)=\begin{pmatrix}
-x_*^{[m-1]}\quad\quad\quad & (m-1)[{\mathcal{T}}_s-\lambda_*\mathcal{I}]x_*^{m-2}\\
0 \quad\quad\quad & 2x_*^T
\end{pmatrix}
$$ is nonsingular, and $(\lambda_*-\alpha,x_*)$ is the unique positive dominant eigenpair of $\mathcal{A}$.
\item[{\rm(iii)}] Let $\mathcal{T}=\mathcal{A}_\varepsilon+\alpha\mathcal{I}$ when $\mathcal{A}$ is reducible, where $\mathcal{A}_\varepsilon$ is defined in Lemma \ref{lem2.2}. Then $\mathcal{T}$ is irreducible, $H_1(\lambda,x)=Q(\lambda,x)=0$ has a unique solution $(\lambda^{\varepsilon}_*,x^{\varepsilon}_*)=(\lambda^{\varepsilon}(1),x^{\varepsilon}(1))\in \mathbb{R}_{++}\times \mathbb{R}_{++}^n$. Moreover, the Jacobian matrix
$$JH^{\varepsilon}_1(\lambda^{\varepsilon}_*,x^{\varepsilon}_*)=\begin{pmatrix}
-(x^{\varepsilon}_*)^{[m-1]}\quad\quad\quad & (m-1)[{\mathcal{T}}_s-\lambda^{\varepsilon}_*\mathcal{I}](x^{\varepsilon}_*)^{m-2}\\
0 \quad\quad\quad & 2(x^{\varepsilon}_*)^T
\end{pmatrix}
$$ is nonsingular, and $(\lambda^{\varepsilon}_*-\alpha,x^{\varepsilon}_*)$ is the unique positive dominant eigenpair of $\mathcal{A}_{\varepsilon}$. Especially,
$$\lim_{\varepsilon\downarrow 0}\lambda^{\varepsilon}_*-\alpha=\lambda(\mathcal{A}),\quad \lim_{\varepsilon\downarrow 0}x^{\varepsilon}_*=x(\mathcal{A}),
$$
where $(\lambda(\mathcal{A}),x(\mathcal{A}))$ is a dominant eigenpair of $\mathcal{A}$.
\end{itemize}
\end{theorem}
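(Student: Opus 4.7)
The strategy is to treat (i)--(iii) as progressively specialised applications of one framework: for each admissible parameter value the tensor $\mathcal{M}_\tau := \tau\mathcal{T} + (1-\tau)\mathcal{S}$ is positive or irreducible nonnegative, so the Perron--Frobenius theorem (Theorem~\ref{thmpf}) supplies a unique positive normalised Perron pair that, by the block form of $H_\tau$, must be the unique zero of $H_\tau$ in $\mathbb{R}_{++}\times\mathbb{R}^n_{++}$.

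For (i), I would first observe that $\mathcal{T}=\mathcal{A}+\alpha\mathcal{I}$ is nonnegative by Theorem~\ref{thmdoeig} and that $\mathcal{S}$ defined by (\ref{that}) is positive, so $\mathcal{M}_\tau$ is strictly positive (hence irreducible) for every $\tau\in[0,1)$; Theorem~\ref{thmpf} then produces the claimed unique positive Perron pair $(\lambda(\tau),x(\tau))$. For the Jacobian (\ref{JH}) I would run the standard bordered-system argument: if $(s,y)\in\mathbb{R}\times\mathbb{R}^n$ lies in its kernel, the last row forces $x(\tau)^T y=0$, while the top block reads
$$(m-1)\bigl[(\mathcal{M}_\tau)_s-\lambda(\tau)\mathcal{I}\bigr]x(\tau)^{m-2}\,y = s\,x(\tau)^{[m-1]}.$$
Differentiating the Perron relation $\mathcal{M}_\tau x(\tau)^{m-1}=\lambda(\tau)x(\tau)^{[m-1]}$ shows that the matrix on the left annihilates $x(\tau)$, and the simplicity of the Perron eigenvalue for a positive tensor (as used in Chen et al.~\cite{Chen2019}) makes that null space precisely $\mathrm{span}\{x(\tau)\}$; together with the strict positivity of $x(\tau)^{[m-1]}$ this forces $s=0$ and $y\in\mathrm{span}\{x(\tau)\}$, and the orthogonality constraint then yields $y=0$. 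Part (ii) is the special case $\tau=1$ with $\mathcal{A}$ irreducible: $\mathcal{T}$ inherits irreducibility because adding $\alpha\mathcal{I}$ perturbs only diagonal entries, the preceding argument applies verbatim, and Theorems~\ref{thmdoeig} and~\ref{uniqthm} then identify $(\lambda_*-\alpha,x_*)$ as the unique positive dominant eigenpair of $\mathcal{A}$.

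For (iii), the perturbed tensor $\mathcal{A}_\varepsilon=\mathcal{A}+\mathcal{E}$ is strictly positive (in particular, off-diagonally positive), hence essentially nonnegative and irreducible, and $\mathcal{A}_\varepsilon+\alpha\mathcal{I}=(\mathcal{A}+\alpha\mathcal{I})+\mathcal{E}\ge 0$, so the shift $\alpha$ from (\ref{alph}) remains admissible for $\mathcal{A}_\varepsilon$; applying (ii) to $\mathcal{A}_\varepsilon$ delivers $(\lambda^\varepsilon_*,x^\varepsilon_*)$, the Jacobian nonsingularity, and the identification of $(\lambda^\varepsilon_*-\alpha,x^\varepsilon_*)$ as the dominant eigenpair of $\mathcal{A}_\varepsilon$. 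The scalar convergence $\lambda^\varepsilon_*-\alpha\to\lambda(\mathcal{A})$ then follows from $\lambda^\varepsilon_*=\rho(\mathcal{A}_\varepsilon+\alpha\mathcal{I})$ via Lemma~\ref{lem2.2} applied to $\mathcal{A}+\alpha\mathcal{I}$. For the eigenvector, $\{x^\varepsilon_*\}$ lies on the unit sphere, so any sequence $\varepsilon_k\downarrow 0$ has a convergent subsequence with limit $x_0$ satisfying $x_0\ge 0$ and $\|x_0\|_2=1$; passing to the limit in $(\mathcal{A}_{\varepsilon_k}+\alpha\mathcal{I})(x^{\varepsilon_k}_*)^{m-1}=\lambda^{\varepsilon_k}_*(x^{\varepsilon_k}_*)^{[m-1]}$ by continuity gives $\mathcal{A}x_0^{m-1}=\lambda(\mathcal{A})x_0^{[m-1]}$, so $x_0$ is a dominant eigenvector $x(\mathcal{A})$.

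The principal technical obstacle is the Jacobian nonsingularity in (i): the bordered-system algebra itself is short, but it rests on the simplicity of the Perron eigenvalue of a positive (or irreducible nonnegative) tensor---a finer Perron--Frobenius fact whose tensor analogue is considerably more delicate than its matrix counterpart. The remaining steps reduce to direct invocations of Theorems~\ref{thmpf}, \ref{thmdoeig}, \ref{uniqthm} and Lemma~\ref{lem2.2}.
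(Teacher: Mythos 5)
Your overall strategy matches the paper's: part (i) is Perron--Frobenius (Theorem~\ref{thmpf}) applied to the positive tensor $\tau\mathcal{T}+(1-\tau)\mathcal{S}$ plus a bordered--system argument for the Jacobian; (ii) is the $\tau=1$ specialisation combined with Theorems~\ref{thmdoeig} and~\ref{uniqthm}; (iii) applies (ii) to $\mathcal{A}_\varepsilon$ and takes limits via Lemma~\ref{lem2.2}. Where you differ is in the first half of the Jacobian argument. The paper gets $t=0$ by left--multiplying the top block equation by $x(\tau)^T$ and invoking the Perron relation $[\tau\mathcal{T}_s+(1-\tau)\mathcal{S}-\lambda(\tau)\mathcal{I}]\,x(\tau)^{m-1}=0$ to kill the matrix term; you instead appeal to ``simplicity of the Perron eigenvalue together with strict positivity of $x(\tau)^{[m-1]}$'' to deduce $s=0$. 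As stated, that inference has a gap: a one--dimensional \emph{right} kernel $\mathrm{span}\{x(\tau)\}$ and the positivity of $x(\tau)^{[m-1]}$ do not by themselves exclude $x(\tau)^{[m-1]}$ from the (codimension--one) range of the Jacobian block, because the block is generally not symmetric. What is actually needed is that the \emph{left} null vector of $(m-1)[\tau\mathcal{T}_s+(1-\tau)\mathcal{S}-\lambda(\tau)\mathcal{I}]x(\tau)^{m-2}$ is positive as well --- which follows from the Perron--Frobenius theory for the irreducible nonnegative matrix $[\tau\mathcal{T}_s+(1-\tau)\mathcal{S}]_s x(\tau)^{m-2}$ (this is the content of \cite[Lemma 4.1]{NiQi} which the paper cites) --- so that its inner product with the positive vector $x(\tau)^{[m-1]}$ is nonzero. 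If you add that one sentence, your route and the paper's coincide in substance. The remaining pieces (irreducibility of the Jacobian block giving $y=0$, the $\tau=1$ case, and the subsequence/compactness argument for the $\varepsilon\downarrow 0$ limit in (iii)) are exactly what the paper does, just spelled out in a little more detail on your side.
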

\begin{proof} For part (i), since $\tau\mathcal{T}+(1-\tau)\mathcal{S}$ is positive, it is irreducible. By Theorem \ref{thmpf},
$$
(\tau\mathcal{T}+(1-\tau)\mathcal{S})x^{m-1}-\lambda x^{[m-1]}=0
$$
has a unique solution in $\mathbb{R}_{++}\times \mathbb{R}_{++}^n$, up to a
constant multiple of $x$. By imposing the normalization condition $x^Tx=1$, $H_{\tau}(\lambda,x)=0$ has a unique solution $(\lambda(\tau),x(\tau))$ in $\mathbb{R}_{++}\times \mathbb{R}_{++}^n$. Clearly,  $(\lambda(\tau),x(\tau))$ is the Perron pair of tensor $\tau\mathcal{T}+(1-\tau)\mathcal{S}$.

In order to show the nonsingularity of $JH_{\tau}(\lambda(\tau),x(\tau))$, we set
$$
\begin{pmatrix}
-x(\tau)^{[m-1]}\quad\quad\quad & (m-1)[\tau{\mathcal{T}}_s+(1-\tau)\mathcal{S}-\lambda(\tau)\mathcal{I}]x(\tau)^{m-2}\\
0 \quad\quad\quad & 2x(\tau)^T
\end{pmatrix}\begin{pmatrix}t\\
z
\end{pmatrix}=0.
$$
Then we have
\begin{equation}\label{eqmath1}
-tx(\tau)^{[m-1]}+\left((m-1)[\tau{\mathcal{T}}_s+(1-\tau)\mathcal{S}-\lambda(\tau)\mathcal{I}]x(\tau)^{m-2}\right)z=0
\end{equation}
and
\begin{equation}\label{eqmath2}
2x(\tau)^Tz=0.
\end{equation}
Multiplying (\ref{eqmath1}) by $x(\tau)$ in left-side, we have
$$
\left((m-1)[\tau{\mathcal{T}}_s+(1-\tau)\mathcal{S}-\lambda(\tau)\mathcal{I}]x(\tau)^{m-1}\right)^Tz-tx(\tau)^T(x(\tau)^{[m-1]})=0,
$$
which, together with $[\tau{\mathcal{T}}_s+(1-\tau)\mathcal{S}-\lambda(\tau)\mathcal{I}]x(\tau)^{m-1}=0$ and $x(\tau)\in \mathbb{R}^n_{++}$, implies that $t=0$. Hence, (\ref{eqmath1}) reduces to
$$
\left((m-1)[\tau{\mathcal{T}}_s+(1-\tau)\mathcal{S}-\lambda(\tau)\mathcal{I}]x(\tau)^{m-2}\right)z=0.
$$
Since the semi-symmetric tensor ${\mathcal{T}}_s$ of $\mathcal{T}$ is also nonnegative and $\mathcal{S}$ is positive, by the proof of \cite[Lemma 4.1]{NiQi}, the matrix $\left((m-1)[\tau{\mathcal{T}}_s+(1-\tau)\mathcal{S}-\lambda(\tau)\mathcal{I}]x(\tau)^{m-2}\right)$ is irreducible. Therefore, $z=0$ follows from (\ref{eqmath2}). Hence, the Jacobian matrix $JH_{\tau}(\lambda(\tau),x(\tau))$ is nonsingular.

The proof of part (ii) is similar. By Theorems \ref{thmdoeig} and \ref{uniqthm}, we show that $(\lambda_*-\alpha,x_*)$ is the unique positive dominant eigenpair of $\mathcal{A}$.

For part (iii), by the definition of $\mathcal{A}_{\varepsilon}$, $\mathcal{A}_{\varepsilon}+\alpha\mathcal{I}$ is positive and so it is irreducible. Similar to the proof of part (ii), we prove part (iii) from Lemma \ref{lem2.2}.
\end{proof}

Analogous to Lemmas 2.3 and 2.4 in \cite{Chen2019}, we have the following lemma.
\begin{lemma}\label{lem2.4}
Let $\mathcal{A}\in\mathbb{R}^{[m,n]}$ be an essentially nonnegative tensor and $\mathcal{T}=\mathcal{A}+\alpha\mathcal{I}$ where $\alpha$ is defined by (\ref{alph}). Let $a,b\in \mathbb{R}^n_{++}$ and the positive tensor $\mathcal{S}$ be defined by (\ref{that}). Then the set
$$\Omega=\{(\lambda,x,\tau)\in \mathbb{R}_{++}\times \mathbb{R}_{++}^n\times[0,1)\,|~H_{\tau}(\lambda,x)=0\}
$$
is a
one-dimensional smooth manifold and is uniformly bounded for $\tau\in[0,1)$.
\end{lemma}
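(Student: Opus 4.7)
The plan is to decompose the claim into two parts: the smooth manifold structure of $\Omega$ and its uniform boundedness. The manifold part is a routine application of the implicit function theorem once Theorem \ref{thmrank}(i) is invoked, while the boundedness part reduces to standard spectral-radius estimates for nonnegative tensors.

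For the manifold structure, I would note that the map $H : \mathbb{R}_{++} \times \mathbb{R}_{++}^n \times [0,1) \to \mathbb{R}^{n+1}$, $(\lambda, x, \tau) \mapsto H_\tau(\lambda, x)$, is real analytic (in fact polynomial) in all its arguments. By Theorem \ref{thmrank}(i), for every $\tau \in [0,1)$ the system $H_\tau(\lambda, x) = 0$ has a unique solution $(\lambda(\tau), x(\tau)) \in \mathbb{R}_{++} \times \mathbb{R}_{++}^n$ at which the partial Jacobian $J_{(\lambda,x)} H_\tau$ is nonsingular. The implicit function theorem then provides, in a neighborhood of each $(\lambda(\tau), x(\tau), \tau) \in \Omega$, a unique smooth branch $\tau \mapsto (\lambda(\tau), x(\tau))$ of zeros. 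The uniqueness in Theorem \ref{thmrank}(i) implies that $\Omega$ coincides globally with this graph over $[0,1)$, and hence $\Omega$ is a smooth one-dimensional manifold diffeomorphic to $[0,1)$.

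For the boundedness, the normalization $x^T x = 1$ built into $H_\tau$ immediately gives $\|x(\tau)\|_2 = 1$, so the $x$-component of $\Omega$ lies in the unit sphere. For the $\lambda$-component, I would use that $(\lambda(\tau), x(\tau))$ is the Perron pair of the nonnegative tensor $\mathcal{B}_\tau := \tau \mathcal{T} + (1-\tau) \mathcal{S}$, hence $\lambda(\tau) = \rho(\mathcal{B}_\tau)$. A standard upper bound for the Perron root of a nonnegative tensor in terms of its maximum row sum gives
\begin{equation*}
\lambda(\tau) \le \max_{i \in [n]} \sum_{i_2,\ldots,i_m = 1}^{n} \big(\tau t_{i i_2 \ldots i_m} + (1-\tau) s_{i i_2 \ldots i_m}\big) \le \max_{i \in [n]} \sum_{i_2,\ldots,i_m = 1}^{n} \max\{t_{i i_2 \ldots i_m}, s_{i i_2 \ldots i_m}\},
\end{equation*}
which is a finite constant independent of $\tau$. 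Combined with $\tau \in [0,1)$ and $\|x(\tau)\|_2 = 1$, this yields the uniform boundedness of $\Omega$.

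The main technical point, if any, is justifying that $\Omega$ is globally a single smooth branch rather than a disjoint union of local pieces; this is where the uniqueness assertion of Theorem \ref{thmrank}(i) (together with the fact that $\mathcal{B}_\tau$ is positive, hence irreducible, for every $\tau \in [0,1)$) is essential. Once that global uniqueness is in hand, both the manifold claim and the boundedness claim follow with only standard tools, so I do not anticipate a serious obstacle beyond careful bookkeeping.
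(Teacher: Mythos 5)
Your proof is correct. Note that the paper itself does not supply a proof of this lemma---it simply states that the result follows ``analogous to Lemmas 2.3 and 2.4 in Chen et al.\ (2019)''---but your reconstruction is precisely the standard argument one expects there: the implicit function theorem applied at each $\tau\in[0,1)$ via the Jacobian nonsingularity from Theorem \ref{thmrank}(i), combined with uniqueness to glue the local branches into a single graph over $[0,1)$, gives the manifold structure, while the normalization $x^Tx=1$ and the maximum row-sum bound on the spectral radius of the positive tensor $\tau\mathcal{T}+(1-\tau)\mathcal{S}$ yield the uniform boundedness.
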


By (i) and (ii) of Theorem \ref{thmrank}, it is reasonable that we can find the unique positive dominant eigenpair of an irreducible essentially nonnegative tensor by following the curve $(\lambda(\tau),x(\tau))$ with $\tau\uparrow 1$. When the given tensor is reducible, by Theorem \ref{thmrank}, we also can find a $\varepsilon$-approximal dominant eigenpair with $\tau\uparrow 1$ and then a dominant eigenpair can be found from Lemma \ref{lem2.2}. The following theorem shows that the homotopy (\ref{Htau}) works well.
\begin{theorem}\label{thmmain}
Let $\mathcal{A}\in\mathbb{R}^{[m,n]}$ be an essentially nonnegative tensor and $\mathcal{T}=\mathcal{A}+\alpha\mathcal{I}$ where $\alpha$ is defined by (\ref{alph}). Let $a,b\in \mathbb{R}^n_{++}$ and the positive tensor $\mathcal{S}$ be defined by (\ref{that}). Starting from the Perron pair $(\lambda_0,x_0)$ of $\mathcal{S}$ as defined in (\ref{eqzero}), solve the homotopy $H_{\tau}(\lambda,x)=0$ in $\mathbb{R}_{++}\times \mathbb{R}_{++}^n\times[0,1)$, and let $(\lambda(\tau),x(\tau))$ be the generated solution curve. Then every limit point of $(\lambda(\tau)-\alpha,x(\tau))$ is a nonnegative dominant eigenpair of $\mathcal{A}$.

If $\mathcal{A}$ is irreducible, then
$$
\lim_{\tau\uparrow1}\lambda(\tau)-\alpha=\lambda(\mathcal{A}),\quad \lim_{\tau\uparrow1}x(\tau)=x(\mathcal{A}),
$$
where $(\lambda(\mathcal{A}),x(\mathcal{A}))$ is the unique positive dominant eigenpair of $\mathcal{A}$.

If $\mathcal{A}$ is reducible, let $\mathcal{T}=\mathcal{A}_\varepsilon+\alpha\mathcal{I}$, where $\mathcal{A}_\varepsilon$ is defined in Lemma \ref{lem2.2}, in the homotopy $H_{\tau}(\lambda,x)=0$. Let $(\lambda^\varepsilon(\tau),x^\varepsilon(\tau))$ be the solution curve obtained by solving this homotopy  in $\mathbb{R}_{++}\times \mathbb{R}_{++}^n\times[0,1)$. Then
$$
\lim_{\varepsilon\downarrow 0}\lim_{\tau\uparrow1}\lambda^\varepsilon(\tau)-\alpha=\lambda(\mathcal{A}),\quad \lim_{\varepsilon\downarrow 0}\lim_{\tau\uparrow1}x^\varepsilon(\tau)=x(\mathcal{A}),
$$
where $(\lambda(\mathcal{A}),x(\mathcal{A}))$ is a nonnegative dominant eigenpair of $\mathcal{A}$.
\end{theorem}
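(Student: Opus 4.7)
The plan is to combine the local existence guaranteed by Theorem \ref{thmrank} with the global compactness supplied by Lemma \ref{lem2.4}, and then pass to the limit $\tau\uparrow 1$ (and, in the reducible case, additionally $\varepsilon\downarrow 0$) using continuity of the Perron--Frobenius spectral radius. First I would invoke Theorem \ref{thmrank}(i): for every $\tau\in[0,1)$ the system $H_\tau=0$ has a unique solution $(\lambda(\tau),x(\tau))\in\mathbb{R}_{++}\times\mathbb{R}_{++}^n$, namely the normalized Perron pair of $\tau\mathcal{T}+(1-\tau)\mathcal{S}$, and the Jacobian $JH_\tau(\lambda(\tau),x(\tau))$ is nonsingular. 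The implicit function theorem therefore yields a unique smooth curve $\tau\mapsto(\lambda(\tau),x(\tau))$ starting from $(\lambda_0,x_0)$ at $\tau=0$. By Lemma \ref{lem2.4} this curve is uniformly bounded on $[0,1)$, so sequential compactness produces at least one convergent subsequence $\tau_k\uparrow 1$ with $(\lambda(\tau_k),x(\tau_k))\to(\lambda_*,x_*)$.

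For the first (general) claim, I would pass to the limit in $H_{\tau_k}(\lambda(\tau_k),x(\tau_k))=0$. Joint continuity of $H_\tau$ in $(\tau,\lambda,x)$ gives $H_1(\lambda_*,x_*)=0$, that is, $\mathcal{T}x_*^{m-1}=\lambda_*x_*^{[m-1]}$ and $x_*^Tx_*=1$. Positivity along the sequence passes to $\lambda_*\ge 0$ and $x_*\ge 0$, and the normalization rules out $x_*=0$. To identify $\lambda_*$ with $\lambda(\mathcal{A})+\alpha$, I would use $\lambda(\tau)=\rho(\tau\mathcal{T}+(1-\tau)\mathcal{S})$ from Theorem \ref{thmrank}(i) together with continuity of the Perron--Frobenius spectral radius on $\mathbb{R}_+^{[m,n]}$ (a standard consequence of the continuous dependence of polynomial roots on coefficients): as $\tau\uparrow 1$ the tensor $\tau\mathcal{T}+(1-\tau)\mathcal{S}$ converges entrywise to $\mathcal{T}$, so $\lambda_*=\rho(\mathcal{T})=\lambda(\mathcal{A})+\alpha$ by Theorem \ref{thmdoeig}. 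Thus $(\lambda_*-\alpha,x_*)$ is a nonnegative dominant eigenpair of $\mathcal{A}$.

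For the irreducible case, Theorem \ref{uniqthm} ensures that the normalized nonnegative dominant eigenpair of $\mathcal{A}$ is unique (and positive). Since every subsequential limit of $(\lambda(\tau)-\alpha,x(\tau))$ coincides with this single pair, the full curve must converge to it, which is the second assertion. For the reducible case I would bootstrap from the irreducible case just established, applied to $\mathcal{A}_\varepsilon$: its off-diagonal entries are all $\ge\varepsilon>0$, so $\mathcal{A}_\varepsilon$ is an irreducible essentially nonnegative tensor and hence $\lim_{\tau\uparrow 1}(\lambda^\varepsilon(\tau)-\alpha,x^\varepsilon(\tau))=(\lambda(\mathcal{A}_\varepsilon),x(\mathcal{A}_\varepsilon))$. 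Finally Lemma \ref{lem2.2}, applied to the nonnegative tensor $\mathcal{A}+\alpha\mathcal{I}$, gives $\lambda(\mathcal{A}_\varepsilon)=\rho(\mathcal{A}_\varepsilon+\alpha\mathcal{I})-\alpha\to\rho(\mathcal{A}+\alpha\mathcal{I})-\alpha=\lambda(\mathcal{A})$, and a standard subsequential argument on the unit-normalized family $\{x(\mathcal{A}_\varepsilon)\}$ extracts a nonnegative eigenvector $x(\mathcal{A})$ of $\mathcal{A}$ at $\lambda(\mathcal{A})$.

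The main obstacle I anticipate is precisely this limit at $\tau=1$ when $\mathcal{A}$ (equivalently $\mathcal{T}$) is reducible: there the Jacobian $JH_1$ may be singular and the normalized nonnegative eigenvector may fail to be unique, so the implicit function theorem cannot extend the curve across $\tau=1$ and one must rely entirely on Lemma \ref{lem2.4} for compactness together with the perturbation estimate of Lemma \ref{lem2.2}. Consequently only subsequential convergence of $x^\varepsilon(\tau)$ can be guaranteed in that case, which is exactly what the statement permits.
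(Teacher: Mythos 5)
Your proof is correct and follows essentially the same route as the paper: boundedness from Lemma \ref{lem2.4}, passage to a limit point satisfying $Q(\lambda_*,x_*)=0$, uniqueness in the irreducible case from Theorem \ref{uniqthm}, and the $\varepsilon$-perturbation step via Lemma \ref{lem2.2} (equivalently Theorem \ref{thmrank}(iii)) in the reducible case. Your explicit identification of $\lambda_*$ with $\rho(\mathcal{T})$ via continuity of the spectral radius is a genuinely needed step that the paper leaves implicit when it cites Theorem \ref{thmdoeig} alone, since for a reducible $\mathcal{T}$ a nonnegative eigenpair need not a priori carry the spectral radius.
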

\begin{proof} By Lemma \ref{lem2.4}, the sequence $\{(\lambda(\tau),x(\tau))\}$ is in $\mathbb{R}_{++}\times \mathbb{R}_{++}^n$ and is uniformly bounded for $\tau\in[0,1)$. Hence, $\{(\lambda(\tau),x(\tau))\}$ has at least a limit point as $\tau\uparrow 1$. Let $(\lambda_*,x_*)$ be an any limit point of  $\{(\lambda(\tau),x(\tau))\}$ as $\tau\uparrow 1$. Then $(\lambda_*,x_*)\in \mathbb{R}_+\times \mathbb{R}^n_+$ and satisfies
\begin{equation}\label{eqmath3}
(\mathcal{A}+\alpha\mathcal{I})x_*^{m-1}-\lambda_*x_*^{[m-1]}=0,\quad x_*^Tx_*=1,
\end{equation}
which, together with Theorem \ref{thmdoeig}, implies that $(\lambda_*-\alpha,x_*)$ is a nonnegative dominant eigenpair of $\mathcal{A}$.

If $\mathcal{A}$ is irreducible, by Theorem \ref{uniqthm}, $\mathcal{A}$ has a unique positive dominant eigenpair $(\lambda(\mathcal{A}),x(\mathcal{A}))$. Hence, combining Theorem \ref{thmrank} (ii)  and (\ref{eqmath3}), we have
$$
\lim_{\tau\uparrow1}\lambda(\tau)-\alpha=\lambda(\mathcal{A}),\quad \lim_{\tau\uparrow1}x(\tau)=x(\mathcal{A}).
$$

If $\mathcal{A}$ is reducible, then it has a nonnegative dominant eigenpair from Theorem \ref{thmdoeig}, denoted as $(\lambda(\mathcal{A}),x(\mathcal{A}))$, too.  Since $\mathcal{A}_\varepsilon+\alpha\mathcal{I}$ is positive and so it is irreducible. Hence, by Theorem \ref{thmpf}, $\mathcal{A}_\varepsilon+\alpha\mathcal{I}$ has a unique positive Perron pair, denoted as $(\lambda^{\varepsilon}_*,x^{\varepsilon}_*)$. Therefore, the obtained sequence $\{(\lambda^\varepsilon(\tau),x^\varepsilon(\tau))\}$ converges to $(\lambda^{\varepsilon}_*,x^{\varepsilon}_*)$ as $\tau\uparrow1$, i.e.,
$$
\lim_{\tau\uparrow1}\lambda^\varepsilon(\tau)=\lambda^\varepsilon_*, \quad \lim_{\tau\uparrow1}x^\varepsilon(\tau)=x^\varepsilon_*.
$$
By Theorem \ref{thmrank} (iii), we have
$$\lim_{\varepsilon\downarrow 0}\lambda^{\varepsilon}_*-\alpha=\lambda(\mathcal{A}),\quad \lim_{\varepsilon\downarrow 0}x^{\varepsilon}_*=x(\mathcal{A}).
$$
So we complete the proof.
\end{proof}

Theorems \ref{thmrank} and \ref{thmmain} show that the homotopy method (\ref{Htau}) works well. For computing the dominant eigenpair of an essentially nonnegative tensor, we will propose a detailed algorithm to implement the homotopy method (\ref{Htau}).

\section{Algorithm description and numerical experiments}

We now present an algorithm that implements the homotopy method (\ref{Htau}) for computing
the unique positive dominant eigenpair of an irreducible essentially nonnegative tensor $\mathcal{A}\in\mathbb{R}^{[m,n]}$. In order to follow the curve in the homotopy method, we differentiate $H_\tau(\lambda,x)=0$ with respect to $\tau$. Then,
$$
{\rm D}_\lambda H_\tau\cdot\frac{{\rm d}\lambda}{{\rm d}\tau}+{\rm D}_xH_\tau\cdot\frac{{\rm d}x}{{\rm d}\tau}+{\rm D}_\tau H_\tau=0,
$$
i.e.,
\begin{equation}\label{diffeq}
JH_\tau \cdot \begin{pmatrix}
\frac{{\rm d}\lambda}{{\rm d}\tau}\\
\frac{{\rm d}x}{{\rm d}\tau}
\end{pmatrix}=-{\rm D}_\tau H_\tau.
\end{equation}
By Theorem \ref{thmrank}, $JH_\tau$ is nonsingular for all $\tau\in[0,1)$, the system of differential equations (\ref{diffeq}) is well defined. Similar to the algorithmic framework in \cite{Chen2019}, we follow the curve obtained by solving the system of differential equations (\ref{diffeq}), where an Euler-Newton type predication-correction approach is utilized. Then let $(\lambda(0),x(0))=(\lambda_0,x_0)$ with $(\lambda_0, x_0)$ defined in (\ref{eqzero}). Thus, we can obtain an approximation $(\bar{\lambda},\bar{x})$ to $(\lambda(\tau),x(\tau))$:
$$
\begin{pmatrix}
\bar{\lambda}\\
\bar{x}
\end{pmatrix}=\begin{pmatrix}
{\lambda}\\
{x}
\end{pmatrix}+\Delta \tau \begin{pmatrix}
\frac{{\rm d}\lambda}{{\rm d}\tau}\\
\frac{{\rm d}x}{{\rm d}\tau}
\end{pmatrix},
$$
 where $\Delta \tau$ is a step size such that $\tau$ increases to $1$.

\begin{algorithm}\label{alg31} {\rm

\begin{description}
\item[]
{\bf INPUT.} Given an essentially nonnegative tensor $\mathcal{A}\in\mathbb{R}^{[m,n]}$. Let $\alpha$ be defined by (\ref{alph}) and $\mathcal{T}=\mathcal{A}+\alpha\mathcal{I}$ if $\mathcal{A}$ is irreducible, $\mathcal{T}=\mathcal{A}_\varepsilon+\alpha\mathcal{I}$ otherwise, where $\mathcal{A}_\varepsilon$ is given in Lemma \ref{lem2.2}.

\item[] {\bf Initialization.}  Choose positive vectors $a,b\in \mathbb{R}^n_{++}$ and construct the positive tensor $\mathcal{S}$ as (\ref{that}).
Choose initial step size $\Delta\tau_0>0$, tolerances $\epsilon_1>0$ and $\epsilon_2>0$. Let $\tau_0=0$, $\lambda_0$ and $x_0$ be defined by (\ref{eqzero}). Choose $\beta\in(0,1)$ that is close to $1$. Set $k=0$.

\item[] {\bf Path following.} For $k=0,1,\ldots$ until $\tau_N<\beta$ and $\tau_{N+1}\ge \beta$ for some $N$:

Set $\tau_{k+1}=\tau_k+\Delta\tau_k$. If $\tau_k<\beta$ and $\tau_{k+1}\ge \beta$, then set $N=k$ and reset $\tau_{N+1}=\beta$ and
$\Delta\tau_N=\beta-\tau_N$.

Let $u=(\lambda,x)$ and $u_k=(\lambda_k,x_k)$. We employ the following Euler-Newton
prediction-correction strategy to find the next point on the path $H_\tau(u)=0$:

\begin{itemize}
\item Prediction Step: Compute the tangent vector $g=\frac{{\rm d}u}{{\rm d}\tau}$ to $H_\tau(u)=0$ at $\tau_k$ by solving the linear system
$$
JH_{\tau_k}(u_k)\cdot g =-{\rm D}_\tau H_{\tau_k}(u_k).
$$
Then compute the approximation  $\bar{u}$ to $u_{k+1}$ by
$$
\bar{u}=u_k+\Delta\tau_k\cdot g.
$$

\item Correction Step: Use Newton's iterations. Initialize $v_0=\bar{u}$. For $i=0,1,2,\ldots$, compute
$$
v_{i+1} = v_i-[JH_{\tau_{k+1}}(v_i)]^{-1}H_{\tau_{k+1}}(v_i)
$$
until $\|H_{\tau_{k+1}}(v_J)\|_2 \le \epsilon_1$. Then set $u_{k+1}=v_J$. If $k = N$, go to Endgame.

\item Adaptively updating the step size $\Delta\tau_k$: If more than three steps of Newton iterations were required to converge within
the desired accuracy, then set $\Delta\tau_{k+1}=0.5\Delta\tau_k$. If $\Delta\tau_{k+1}\le 10^{-6}$, set $\Delta\tau_{k+1}=10^{-6}$. If two consecutive steps were not cut, then set $\Delta\tau_{k+1}=2\Delta\tau_k$. If $\Delta\tau_{k+1}\ge 0.4$, set $\Delta\tau_{k+1}=0.4$. Otherwise, $\Delta\tau_{k+1}=\Delta\tau_{k}$.
\end{itemize}
Set $k=k+1$.

\item[]{\bf Endgame.} Set $\Delta\tau=1-\beta$. First compute the tangent vector $g=\frac{{\rm d}u}{{\rm d}\tau}$ to $H_\tau(u)=0$ at $\beta$ by solving the linear system
    $$
JH_{\beta}(u_{N+1})\cdot g =-{\rm D}_\tau H_{\beta}(u_{N+1}),
$$
and compute the prediction $\bar{u}$ by
$$
\bar{u}=u_{N+1}+\Delta\tau\cdot g.
$$
Then use Newton's method to compute the correction. Initialize $v_0=\bar{u}$. For $i=0,1,\ldots$, compute
$$v_{i+1}=v_i-[JH_1(v_i)]^{-1}H_1(v_i).
$$
until $\|Q(v_{J})\|_2\le \epsilon_2$. Stop, and set $(\lambda_*,x_*)=v_J$. Output $(\lambda_*-\alpha,x_*)$ as the dominant eigenpair of $\mathcal{A}$.
\end{description}}
\end{algorithm}

Note that $H_1(v_i)=Q(v_i)$ from (\ref{Qlx}) and (\ref{Htau}) in Endgame of Algorithm \ref{alg31}. By Theroems \ref{thmrank} and \ref{thmmain}, Algorithm \ref{alg31} is well-defined. We now report some numerical results testing Algorithm \ref{alg31}. We compare it with the power-type algorithm proposed in \cite{Zhang2013}, denoted as {\bf PTA} for convenience. All the experiments were done using MATLAB R2015a on a laptop computer with Intel Core i5-6300HQ at 2.3 GHz and 4 GB memory running Microsoft Windows 10. The tensor toolbox of \cite{tool} was used to compute tensor-vector products and to compute
the semi-symmetric tensor $\mathcal{T}_s$.

In our experiments, we used $a=b=(1,\ldots,1)^T$ and $\Delta\tau_0=0.1$, $\epsilon_{1}=10^{-5}$, $\epsilon_{2}=10^{-10}$, and $\beta=0.9999$ in
Algorithm \ref{alg31}. We also used $x_0$ defined in (\ref{eqzero}) as the initial vector in {\bf PTA}. {\bf PTA} was terminated if one of the following conditions was met:
\begin{itemize}
\item[(a)] $\|Q(\lambda_k,x_k)\|_2\le 10^{-10}$, where $Q(\lambda,x)$ is defined as (\ref{Qlx}).
\item[(b)] The number of iterations exceeds $50000$.
\end{itemize}
Note that regular termination condition (a) is the same as the one used in Algorithm \ref{alg31} at regular termination. We also
set the maximal allowed number of prediction-correction steps for Algorithm \ref{alg31} as $50000$. We set $\varepsilon=10^{-9}$ in $\mathcal{A}_\varepsilon$ if required.

We first test Algorithm \ref{alg31} on the
following four examples.
\begin{example} \label{eg1}
This example was given in \cite[Example 5.1]{Zhang2013}. Consider the following essentially nonnegative tensor $\mathcal{A}\in \mathbb{R}^{[3,3]}$ defined by
\begin{align*}
\mathcal{A}(:,:,1)&=\begin{pmatrix}
  -1.51 & 8.35 & 1.03 \\
  4.04 & 3.72 & 1.45 \\
  6.71 & 6.43 & 1.35
\end{pmatrix},
\\
\mathcal{A}(:,:,2)&=\begin{pmatrix}
  9.02 & 0.78 & 6,89 \\
  9.71 & -5.32 & 1.85 \\
  2.09 & 4.17 & 2.98
\end{pmatrix},\\
\mathcal{A}(:,:,3)&=\begin{pmatrix}
  9.55 & 1.57 & 6.91 \\
  5.63 & 5.55 & 1.43 \\
  5.76 & 8.29 & -0.15
\end{pmatrix}.
\end{align*}
\end{example}

\begin{example} \label{eg2} This example was given in \cite[Example 5.2]{Zhang2013}. Consider the following essentially nonnegative tensor $\mathcal{A}\in \mathbb{R}^{[3,3]}$ defined by $\mathcal{A}_{133}=\mathcal{A}_{233}=\mathcal{A}_{311}=\mathcal{A}_{322}=1$, $\mathcal{A}_{111}=\mathcal{A}_{222}=-1$, and zero otherwise.
\end{example}

\begin{example} \label{eg3}
Let $\mathcal{A}\in \mathbb{R}^{[3,100]}$ be a randomly generated tensor. Its diagonal elements are within $[-1,0]$ and  off-diagonal elements are within $[0,1]$.
\end{example}

\begin{example} \label{eg4}
Let $\mathcal{A}\in \mathbb{R}^{[4,50]}$ be a randomly generated tensor. Its diagonal elements are within $[-1,0]$ and  off-diagonal elements are within $[0,1]$.
\end{example}

Clearly, the tensors in Examples \ref{eg1} and  \ref{eg2} are irreducible. The tensors defined in Examples \ref{eg3} and  \ref{eg4} are randomly generated large-scale essentially nonnegative tensors. We summarize the numerical
results for Examples \ref{eg1}-\ref{eg4} in Table \ref{tab1}. In order to show the efficiency of Algorithm \ref{alg31}, we report the number of
prediction-correction steps (\emph{iter}),  the total number of Newton iterations (\emph{nwtiter}), the elapsed CPU time in seconds (\emph{time}), and the dominant eigenvalue $\lambda(\mathcal{A})$ in Table \ref{tab1}.

\begin{table}[h]
\centering
\setlength{\abovecaptionskip}{0pt}%
\setlength{\belowcaptionskip}{10pt}%
\caption{Performance of Algorithm \ref{alg31} on Examples \ref{eg1}-\ref{eg4}}
\begin{tabular}{c c c cc}
\hline
Example & \emph{iter} & \emph{nwtiter} & \emph{time} & $\lambda(\mathcal{A})$\\
\hline
\ref{eg1} & 11 & 18  & 0.6382 & 36.2757\\
\ref{eg2} & 11 & 19  & 0.1599 & 1.0000\\
\ref{eg3} & 11 & 13  & 1.7922 & $5.0021\times10^{3}$\\
\ref{eg4} & 11 & 13  & 6.2493 & $6.2482\times10^{4}$\\
\hline
\end{tabular}
\label{tab1}
\end{table}

From Table \ref{tab1}, we can see that  Algorithm \ref{alg31} performs very well even for the large-scale essentially nonnegative tensors in Examples \ref{eg3} and \ref{eg4}. This shows that the homotopy method (\ref{Htau}) is very effective.

In the following example, we compare Algorithm \ref{alg31} with the power-type algorithm {\bf PTA} proposed in \cite{Zhang2013}.
\begin{example}\label{eg5} Consider an essentially nonnegative tensor $\mathcal{A}\in \mathbb{R}^{[m,n]}$, the diagonal entries of $\mathcal{A}$ are randomly generated from the interval $[-1,0]$ and the off-diagonal elements are in $[0,1]$. For the sake of comparison, all entries of $\mathcal{A}$ are adjusted as follows:
\begin{equation}\label{modif}
a_{i_1\ldots i_m}=a_{i_1\ldots i_m}\times10^{-d},\quad i_j\in [n],~~j\in[m],
\end{equation}
where $d$ is a positive integer. After this modification, we randomly generate $100$ such tensors $\mathcal{A}\in \mathbb{R}^{[m,n]}$ and use this two algorithms to show their performances.
\end{example}

In both algorithms, we set the test tensor $\mathcal{T}=\mathcal{A}_\varepsilon+\alpha\mathcal{I}$, where $\mathcal{A}_\varepsilon$ is defined in Lemma \ref{lem2.2}. Clearly, $\mathcal{T}$ is positive so it is irreducible. Hence, the algorithm {\bf PTA} has linear convergence \cite{Zhang2013,zlplinear,QiLuo} and its rate of convergence from \cite[Theorem 3.88]{QiLuo} is
\begin{equation}\label{rato}
1-\frac{\min_{i,j\in[n]}t_{ij\ldots j}}{\max_{i\in[n]}\sum_{i_2,\ldots,i_m=1}^nt_{ii_2\ldots i_m}}.
\end{equation}
Clearly, after the modification in (\ref{modif}), the off-diagonal elements of $\mathcal{T}$ are very small when $d$ is large. Thus, the rate of convergence is close to $1$ and hence the algorithm {\bf PTA} will converge very slow. On the other hand, the last correction step in Algorithm \ref{alg31} is Newton's method. Therefore, it is quadratically convergent
in the step. This modification in (\ref{modif}) will not affect the performance of Algorithm \ref{alg31}.

The numerical results are reported in the following table. For Algorithm \ref{alg31}, \emph{Aiter} denotes the average number of
prediction-correction steps,  \emph{Anwtiter} denotes the average total number of Newton iterations and \emph{Atime} denotes the average elapsed CPU time in seconds. For the algorithm {\bf PTA}, \emph{Aiter} denotes the average number of iterations and \emph{Atime} denotes the average elapsed CPU time in seconds.

\begin{table}[h]
\centering
\setlength{\abovecaptionskip}{0pt}%
\setlength{\belowcaptionskip}{10pt}%
\caption{Performances of Algorithm \ref{alg31} and {\bf PTA} on Example \ref{eg5}}
\begin{tabular}{c c |c c c|c c}
\hline
& & \multicolumn{3}{c}{Algorithm \ref{alg31}} & \multicolumn{2}{|c}{\bf PTA}  \\
\hline
$(m,n)$ & $d$  & \emph{Aiter} & \emph{Anwtiter} & \emph{Atime} & \emph{Aiter} & \emph{Atime}\\
\hline
(3,10) & 3 & 11 & 14.05 & 0.0294 & 524.51 & 0.0272 \\
  (3,10) & 4 & 11 & 14.05 & 0.0298 & 5121.9 & 0.2382 \\
  (3,10) & 5 & 11 & 14.06 & 0.0299 & 49832 & 2.3989 \\
  (3,10) & 6 & 11 & 14.18 & 0.0303 & $>50000$ & \  \\
  \hline
  (4,10) & 4 & 11 & 14 & 0.0380 & 540.07 & 0.0426 \\
  (4,10) & 5 & 11 & 13.99 & 0.0373 & 5296.2 & 0.3353 \\
  (4,10) & 6 & 11 & 13.97 & 0.0371 & $>50000$ & \  \\
  \hline
  (3,20) & 4 & 11 & 14 & 0.0347 & 1321.6 & 0.0745 \\
  (3,20) & 5 & 11 & 14 & 0.0345 & 13146 & 0.6725 \\
  (3,20) & 6 & 11 & 14 & 0.0343 & $>50000$ & \  \\
  \hline
  (4,20) & 5 & 11 & 13.98 & 0.1945 & 706.8 & 0.1146 \\
  (4,20) & 6 & 11 & 13.66 & 0.1909 & $>50000$ &  \\
  (4,20) & 7 & 11 & 14.21 & 0.1955 & $>50000$ & \  \\
\hline
\end{tabular}
\label{tab2}
\end{table}

Table \ref{tab2} compares the average CPU time and the average number of iterations of these algorithms. From Table \ref{tab2}, we know that
the algorithm {\bf PTA} performs well when the parameter $d$ is
small. Algorithm \ref{alg31} is more efficient than the algorithm {\bf PTA}  when $d$ is large, in terms of number of iterations. The reason for
this is because the algorithm {\bf PTA}  is linearly convergent and its rate of convergence depends on the ratio in (\ref{rato}). When $d$ is large, the ratio is close to $0$ and the rate of convergence (\ref{rato}) is close to $1$. Thus the algorithm {\bf PTA} becomes slow. Numerical results show that the ratio in (\ref{rato}) does not affect the performance of Algorithm \ref{alg31}.
 Algorithm \ref{alg31} is better than the algorithm {\bf PTA} for large-scale problems.

\section{Conclusion}

 The dominant eigenpair of an essentially nonnegative tensor plays an important role in network resource allocations. In this paper, we first show that an irreducible essentially nonnegative tensor has a unique positive dominant eigenpair with the eigenvector normalized. On the other hand, according to Theorem \ref{thmdoeig} and \ref{uniqthm}, eigenpair $(\lambda(\mathcal{A})+\alpha,x(\mathcal{A}))$ coincides with Perron pair $(\rho(\mathcal{A}+\alpha\mathcal{I}),x)$ for nonnegative tensor $\mathcal{A}+\alpha\mathcal{I}$ with the eigenvectors normalized, which motivates us to find the dominant eigenpair via the homotopy method in \cite{Chen2019}. Based on the algorithmic framework of the homotopy method in \cite{Chen2019}, we propose a homotopy method for computing the dominant eigenpair. The convergence of the proposed homotopy method has been established for both irreducible and reducible cases in Theorems \ref{thmrank} and \ref{thmmain}. We have designed a concrete algorithm (Algorithm \ref{alg31}) using the Newton type prediction-correction strategy to implement the homotopy method. We have provided some numerical results to illustrate the efficiency of Algorithm \ref{alg31}. Numerical experiments show that the homotopy method is promising, particularly when the value of (\ref{rato}) is close to $1$, while the power-type method in \cite{Zhang2013} becomes slow for this case. Algorithm \ref{alg31} shows that the homotopy techniques are very useful for computing the dominant eigenpair of an essentially nonnegative tensor.



\end{document}